\title{An Algorithm to Solve Polyhedral Convex Set Optimization Problems}
\author{Andreas L\"{o}hne\thanks{Corresponding author. Martin-Luther-Universit\"{a}t Halle-Wittenberg, Institut f\"ur Mathematik, 06099 Halle (Saale), Germany, email:
\texttt{andreas.loehne@mathematik.uni-halle.de}.} \and Carola Schrage}
\date{October 2, 2012 (updated: \today)}
\newtheorem{theorem}{Theorem}[section]
\newtheorem{corollary}[theorem]{Corollary}
\newtheorem{remark}[theorem]{Remark}
\newtheorem{lemma}[theorem]{Lemma}
\newtheorem{proposition}[theorem]{Proposition}
\newtheorem{ex}[theorem]{Example}
\newenvironment{example}{\begin{ex} \em }{\em \end{ex}}
\newtheorem{definition}[theorem]{Definition}
\newcommand{\norm}  [1]{\ensuremath{\left  \|       #1  \right \|       }}
\newcommand{\cb}    [1]{\ensuremath{\left  \{      #1  \right \}       }}
\newcommand{\of}    [1]{\ensuremath{\left (        #1  \right )        }}
\newcommand{\ofg}   [1]{\ensuremath{\bigl (        #1  \bigr  )        }}
\newcommand{\st} {\ensuremath{|\;}}
\newcommand{\cl}  {{\rm cl  \,}}
\newcommand{\Int} {{\rm int \,}}
\newcommand{\conv}  {{\rm conv \,}}
\newcommand{\cone}{{\rm cone\,}}
\newcommand{\gr}  {{\rm gr\,}}
\newcommand{\dom}{{\rm dom\,}}
\renewcommand{\subset}{\subseteq}
\renewcommand{\supset}{\supseteq}
\newcommand{\smz}{\setminus\{0\}}
\newcommand{\G}{\mathcal{G}}
\renewcommand{\P}{\mathcal{P}}
\newcommand{\R}{\mathbb{R}}
\begin{document}
\maketitle

\begin{abstract}
An algorithm which computes a solution of a set optimization problem is provided. The graph of the objective map is assumed to be given by finitely many linear inequalities. A solution is understood to be a set of points in the domain satisfying two conditions: the attainment of the infimum and minimality with respect to a set relation. In the first phase of the algorithm, a linear vector optimization problem, called the vectorial relaxation, is solved. The resulting pre-solution yields the attainment of the infimum but, in general, not minimality. In the second phase of the algorithm, minimality is established by solving certain linear programs in combination with vertex enumeration of some values of the objective map.
\\[.2cm]
{\bf Keywords and phrases.} set-valued optimization, set relation, set criterion, vector optimization, infimum attainment
\\[.2cm]
{\bf Mathematical Subject Classification (2000).} 65K05, 90C99
\end{abstract}

\section{Introduction}

In this paper we consider the problem to minimize a set-valued map $F:\R^n\rightrightarrows\R^q$ with polyhedral convex graph with respect to the relation
\[ F(x) \preceq F(u) \quad:\iff\quad F(x) +C \supset F(u), 
\]
where $C$ denotes a polyhedral convex ordering cone that contains no lines and has nonempty interior. The objective map can be considered as a function from $\R^n$ into the space $\G$ of all closed convex subsets of $\R^q$. With the above ordering relation, one obtains a complete lattice, i.e., the infimum $\inf\cb{F(x)\st x\in\R^n}$ in the sense of a greatest lower bound with respect to $\preceq$ always exists. Solution concepts for complete-lattice-valued problems have been introduced in \cite{HeyLoe11}. The main idea is that, beyond scalar optimization, {\em minimality} and {\em infimum attainment} are two different conditions and a solution shall involve both. Such a solution concept is also useful in vector optimization \cite{LoeTam07,HeyLoe11,Loehne11,HamLoeRud12}. 
In a set-valued framework, it has been used, for instance, in \cite{H09,HamLoe12,HamSch12}.
Applications of set optimization based on the above ordering relation and solution concept can be found in mathematical finance in the framework of markets with frictions, see e.g. \cite{HamRud08,HamHey09,HHR11,HamelRudloffYankova12,LoeRud11,HamLoeRud12}. But, in specific calculations, only the infimum attainment have been considered so far. The algorithm presented in this note ensures also minimality.

Optimization problems with set-valued objective function but partially based on other ordering relations or other solution concepts have been investigated by many authors. References and results can be found, for instance, in \cite{Luc88,GoeTamRiaZal03,Jahn04,Hamel05,BotGraWan09,HerRodSam10,Loehne11,JahHa11}.

\section{Preliminaries}

A set $P \subseteq \R^q$ is said to be {\em polyhedral convex} if there is a representation
\begin{equation}\label{eq_H}
P=\bigcap_{i=1}^r \cb{y \in \R^q \st (w^i)^T y \geq \gamma_i} 
\end{equation}
where $w^1, \ldots, w^r \in \R^q\setminus\cb{0}$ and $\gamma_1, \ldots, \gamma_r \in \R$. Equation \eqref{eq_H} is called {\em H-representation} of $P$. Every non-empty polyhedral convex set $P \subseteq \R^q$ can be expressed as a (generalized) convex hull of finitely many points $x^1, \ldots, x^s \in \R^q$ ($s\in \cb{1,2,3,\dots}$) and finitely many directions $d^1, \ldots, d^t \in \R^q\setminus \cb{0}$ ($t \in \cb{0,1,2,\dots}$) through
\begin{equation}\label{eq_V}
P = \cb{ \sum_{i=1}^s \lambda_i x^i + \sum_{j=1}^t \mu_j d^j \bigg|\; \lambda_i \geq 0,\; \sum_{i=1}^s \lambda_i = 1,\; \mu_j \geq 0},
\end{equation}
where $d \in \R^q\smz$ is called a direction of $P$ if $P + \cb{\lambda\cdot d} \subseteq P$ for all $\lambda>0$.
This can be also written with the convex hull of the points and the cone generated by the directions as $P = \conv \cb{x^1, \ldots,x^s} + \cone\cb{d^1, \ldots, d^t}$.
We set $\cone \emptyset = \cb{0}$, thus, $P$ is bounded if and only if $t=0$. Equation \eqref{eq_V} is called a {\em V-representation} of $P$. Numerical methods to compute a V-representation from an H-representation and vise versa are called {\em vertex enumeration}, see e.g. \cite{BarDobHuh96,BreFukMar98}. We denote by $\cl P$ and $\Int P$, respectively, the closure and interior of a set $P \subset \R^q$.

We assume throughout that $C \subset \R^q$ is a pointed (i.e., $C \cap (-C)= \cb{0}$) polyhedral convex cone with $\Int C \neq \emptyset$. The cone $C$ yields a partial ordering $\leq_C$ on $\R^q$ where $y \leq_C v$ is defined by $v-y \in C$. If $C=\R^q_+ :=\cb{y\in \R^q \st y_1 \geq 0, \ldots , y_q \geq 0}$, the component-wise ordering $\leq_{\R^q_+}$ is abbreviated to $\leq$.
The polar cone of $C$ is the set $C^\circ:=\cb{v \in \R^q \st \forall y \in C: v^T y \leq 0}$. A point $y \in \R^q$ is said to be {\em $C$-minimal} in $P\subset \R^q$ if $y \in P$ and $(\cb{y} -C\smz) \cap P = \emptyset$. We assume that an H-representation of $C$ is given, that is, a matrix $Z\in \R^{q \times p}$ such that
\begin{equation}\label{eq_c}
C=\cb{y \in \R^q \st Z^T y \geq 0}.
\end{equation} 
Let $\G$ denote the family of all closed convex subsets of $\R^q$. By $\G_C$ we denote the subfamily of those elements $P$ of $\G$ having the additional property $P=P+C$. For $P,Q \in \G$ we define
\[ P \preceq Q \quad :\iff \quad P + C \supset Q + C,\]
which can be equivalently written as $P + C \supset Q$.
The ordering $\preceq$ is reflexive and transitive in $\G$ (quasi ordering) and, additionally, antisymmetric in $\G_C$ (partial ordering). For $P,Q \in \G$ we define an equivalence relation by
\[ P \sim Q \quad :\iff \quad P + C = Q + C. \]
Clearly, the quotient space $\G/\!\sim$ is isomorphic to $\G_C$ and thus $\preceq$ is a partial ordering in $\G/\!\sim$.

\begin{remark}\label{rem1}
In a theoretical framework the space $\G_C$ is often more convenient and leads to easier formulations. From a computational viewpoint, however, the usage of $\G$ and $\G/\!\sim$ seems to be more natural. This is due to the fact, that an H/V-representation of some $P \in \G$ with $P\subsetneq P+C$ might be known whereas getting an H/V-representation of $P+C$ would require computational effort.
\end{remark}

The partially ordered set $(\G/\!\!\sim,\preceq)$ provides a {\em complete lattice}, i.e., for every subset of $\G/\!\!\sim$ there exist the infimum and supremum, see e.g. \cite{Loehne11} for more details. To simplify the notation we express the infimum and supremum in terms of the (quasi-ordered) space $(\G,\preceq)$, where we have in mind that we actually deal with representatives of equivalence classes. Thus, for nonempty sets $\P \subseteq \G$ we have
\[ \inf \P = \cl\conv \bigcup_{P \in \P}  (P+C) \qquad \sup \P = \bigcap_{P \in \P}  (P+C). \]
Furthermore, we set $\inf \emptyset = \emptyset$ and $\sup \emptyset = \R^q$.
To express minimality we define for $P,Q \in \G$:
\[ P \precneq Q \quad:\iff\quad ( P\preceq Q \text{ and } P\nsim Q).\]

Let $F:\R^n\rightrightarrows\R^q$ be a {\em polyhedral convex} set-valued map, that is, its graph
\[\gr F :=\cb{(x,y)\in \R^n\times\R^q\st y\in F(x)}\]
is a polyhedral convex set. We assume throughout that an H-representation of $\gr F$ is known. This means, there are $A\in\R^{m\times n}$, $B\in \R^{m \times q}$ and $b \in \R^m$ such that
\begin{equation}\label{eq_grH}
\gr F :=\cb{(x,y)\in \R^n\times\R^q \st A x + B y \geq b}.
\end{equation}
The {\em domain} of $F$ is the set $\dom F := \cb{x \in \R^n \st F(x)\neq \emptyset}$.
We consider the following set optimization problem:
\begin{equation}\tag{P}\label{p}
 \text{ minimize } F: \R^n\rightrightarrows\R^q \text{ with respect to } \preceq.
\end{equation}
Problem \eqref{p} is called {\em feasible} if $\dom F \neq \emptyset$. We assume throughout that \eqref{p} is {\em bounded} in the sense that 
\[ \exists v \in \R^q:\; \cb{v} \preceq \inf_{x \in \R^n} F(x). \]
In our setting, it is sufficient for \eqref{p} being bounded that $\dom F$ is a bounded set and 
\[ \forall x \in \R^n,\exists v\in\R^q: \; \cb{v} \preceq F(x),\]
where the latter condition is obviously satisfied for a map $F$ with bounded values $F(x)$. Note that the algorithm introduced below can verify whether the problem is bounded or not.

The following solution concept is based on a combination of minimality and infimum attainment as these notions do no longer coincide in vector and set optimization. It is an adaptation of the concepts introduced in \cite{HeyLoe11,Loehne11,HamLoeRud12} to the present setting.
\begin{definition}
A point $\bar x \in \dom F$ is said to be a {\em minimizer} for \eqref{p} if there is no $x \in \R^n$ with $F(x) \precneq F(\bar x)$. A finite set $\bar X \subseteq \dom F$ is called a {\em finite infimizer} for \eqref{p} if the infimum is attained in $\bar X$, that is,
\[ \inf_{x \in \bar X} F(x) = \inf_{x \in \R^n} F(x). \]
A finite infimizer $\bar X$ of \eqref{p} is called a {\em solution} to \eqref{p} if it consists of only minimizers.
\end{definition}

\section{Vectorial relaxation and pre-solution}

Consider the linear function $f:\R^n\times\R^q \to \R^q$, $f(x,y)=y$. Because of formal reasons we understand $f$ as a set-valued map whose values are singleton sets, i.e.,
\[ f:\R^n\times\R^q\rightrightarrows\R^q, \quad f(x,y)=\cb{y}.\]
The {\em vectorial relaxation} of the set optimization problem \eqref{p} is defined as:
\begin{equation}\tag{VR}\label{r}
 \text{ minimize } f: \R^n\times\R^q\rightrightarrows\R^q \text{ with respect to } \preceq \text{ subject to } y \in F(x).
\end{equation}
Of course, \eqref{r} can be seen as a special case of a set optimization problem, whence the above definitions apply also to \eqref{r}. Obviously, \eqref{r} is feasible if and only if so is \eqref{p}. As $f$ is single-valued and the constraint $y \in F(x)$ can be expressed by finitely many linear inequalities, \eqref{r} is (equivalent to) a linear vector optimization problem. We have
\begin{equation}\label{eq_inf}
 \inf_{x \in \R^n} F(x) = \inf_{x \in \R^n} \inf_{y \in F(x)} \cb{y} = \inf_{x\in \R^n\!\!,\,y \in F(x)} f(x,y),
\end{equation}
i.e., \eqref{p} and \eqref{r} have the same infima. This implies that \eqref{p} is bounded if and only if so is \eqref{r}. Equation \eqref{eq_inf} motivates the following concept.
\begin{definition}
   A finite set $\{x^i \in \R^n \st i=1,\dots,k\}$ is called a pre-solution of \eqref{p} if there exist $y^i \in \R^q$, $i=1,\dots,k$ such that $\{(x^i,y^i)^T \in \R^n\times\R^q \st i=1,\dots,k\}$ is a solution of the vectorial relaxation \eqref{r} of \eqref{p}.
\end{definition}
The following example shows that the $x$-component of a minimizer of \eqref{r} is in general not a minimizer of \eqref{p}. This means that a pre-solution of \eqref{p} is in general not a solution of \eqref{p}.
\begin{example} Consider the set-valued map $F:\R^2 \rightrightarrows \R^2$ where, according to \eqref{eq_grH}, $\gr F \subset \R^4$ is given by
\[
\renewcommand{\arraystretch}{0.8}
A=\left(\begin{array}{rr} 
1 & 0 \\ %5 
0 & 1 \\ %9
-1 & 0\\ %12
0 & -1\\ %14
0 & 0 \\ %1
0 & 0 \\ %2
0 & 0 \\ %10
0 & 0 \\ %13
0 & 1 \\ %6
0 & -1\\ %11
-2 & 2\\ %3
-1 & 1\\ %4 
2 & -2\\ %7
1 & -1   %8
\end{array}\right)\quad
B=\left(\begin{array}{rr}
 0 & 0 \\ %5
 0 & 0 \\ %9
 0 & 0 \\ %12 
 0 & 0 \\ %14
-1 & -1\\ %1
 2 & 1 \\ %2
 1 & 2 \\ %10 
 1 & 1 \\ %13  
 1 & 0 \\ %6
 0 & 1 \\ %11 
 1 & 0 \\ %3
 1 & 0 \\ %4 
 0 & 1 \\ %7
 0 & 1    %8 
 \end{array}\right)\quad
b=\left(\begin{array}{r}
-1 \\ %5
0  \\ %9
-1 \\ %12
-1 \\ %14
-3 \\ %1 
2  \\ %2
2  \\ %10
2  \\ %13
0  \\ %6
-1 \\ %11
0  \\ %3
0  \\ %4
-2 \\ %7 
-1    %8
\end{array}\right)\quad
\]
%The domain of $F$ is obtained as $\dom F = \cb{(x_1,x_2) \st -1 \leq x_1 \leq 1,\; 0 \leq x_2 \leq 1}$.
Setting
\[ x^1:=(-1,0)^T,\; x^2:=(-1,1)^T,\; x^3:=(0,0),\; x^4:=(0,1)^T,\]
we see that
\[
\renewcommand{\arraystretch}{1.6}
\begin{array}{l}
 F(x^1)= \conv\cb{\of{0,2}^T,\of{2,0}^T,\of{0,3}^T,\of{3,0}^T}, \\
 F(x^2)= \conv\cb{\of{0,2}^T,\of{-1,4}^T,\of{1,2}^T},\\
 F(x^3) = \conv\cb{\of{0,2}^T,\of{2,0}^T,\of{0,3}^T,\of{4,-1}^T},\\
 F(x^4) = \conv\cb{\of{0,2}^T,\of{2,0}^T,\of{-1,4}^T,\of{3,0}^T}.
 %F(x^5) &= \conv\cb{\of{2,0}^T,\of{4,-1}^T,\of{2,1}^T}
 \end{array}
\]
Consider the problems \eqref{p} and \eqref{r} for the ordering cone $C:=\R^2_+$. Set
\[ y^1:=(0,2)^T,\; y^2:=(2,0)^T,\; y^3:=(-1,4),\; y^4:=(4,-1)^T.\]
The infimum for both problems \eqref{p} and \eqref{r} can be expressed as
\[ \inf_{x \in \R^2} F(x) = \conv\cb{y^1,y^2,y^3,y^4}+\R^2_+ =:Q,\]
where each of the points $y^1,\dots,y^4$ is $\leq_C$-minimal in the set $Q$.
It follows that, for instance, the set $\cb{(x^1,y^1)^T,(x^1,y^2)^T,(x^2,y^3)^T,(x^3,y^4)^T}$ is a solution of \eqref{r}. Hence $\cb{x^1,x^2,x^3}$ is a pre-solution of \eqref{p}.
But $\cb{x^1,x^2,x^3}$ is not a solution of \eqref{p}. Indeed, we have $F(x^3) \precneq F(x^1)$ and $F(x^4) \precneq F(x^2)$ which means $x^1$ and $x^2$ are not minimizers for problem \eqref{p}. Note further that $\cb{x^3,x^4}$ is a solution to \eqref{p}.
\end{example}

The following modification of an example provided by Frank Heyde shows that a solution to \eqref{p} is in general not a pre-solution to \eqref{p}.

\begin{example} Let $C=\R^2_+$ and let $F:\R^2 \rightrightarrows \R^2$ with $\dom F = \{(x_1,x_2) \st x_1 \geq 0,\; x_1 + |x_2| \leq 1\}$ be defined by 
$$ F(x) := \cb{(z_1,z_2) \st z_1\geq -x_1+x_2,\, z_2\geq -x_1-x_2,\, z_1+z_2\geq x_1}.$$
The set $\{(0,1)^T, (0,-1)^T, (\frac{1}{2},0)^T\}$ is a solution, but not a pre-solution. Indeed, $(\frac{1}{2},0)^T$ is a minimizer for \eqref{p}, but no point of the form $(\frac{1}{2},0,y_1,y_2) \in \gr F$ is a minimizer for \eqref{r}. 
\end{example}

As a consequence of the following statement we obtain that every solution to \eqref{p} contains a pre-solution to \eqref{p}.

\begin{proposition}\label{prop:Corr2} Every finite infimizer for \eqref{p} contains a pre-solution to \eqref{p}.
\end{proposition}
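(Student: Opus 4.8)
The plan is to exploit the polyhedral structure of the common infimum of \eqref{p} and \eqref{r}. Write $Q := \inf_{x\in\R^n} F(x)$ and let $U := \bigcup_{x\in\R^n} F(x)$ be the projection of $\gr F$ onto the $y$-space. First I would show that $U$ is polyhedral convex (projections of polyhedra are polyhedra), so that $U+C$ is polyhedral convex, hence closed and convex; this gives $\cl\conv\bigcup_{x}(F(x)+C) = U+C$, i.e. $Q = U + C$. Since \eqref{p} is bounded we have $Q \subseteq v+C$ for some $v$, which forces the recession cone of $Q$ to be exactly $C$ (it contains $C$ because $Q+C=Q$, and is contained in $C$ by letting $\lambda\to\infty$ in $\eta+\lambda d\in v+C$). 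As $C$ is pointed, $Q$ is line-free and thus $Q = \conv\cb{\eta^1,\dots,\eta^s}+C$ with $\eta^1,\dots,\eta^s$ its vertices. A short extreme-point argument shows each $\eta^j$ is $C$-minimal in $Q$: if $w\in Q$ with $w\leq_C\eta^j$ and $w\neq\eta^j$, then $c:=\eta^j-w\in C\smz$, $\eta^j+c\in Q$, and $\eta^j=\tfrac12 w+\tfrac12(\eta^j+c)$ contradicts extremality. Writing $\eta^j=w+c$ with $w\in U$, $c\in C$ and using this $C$-minimality forces $c=0$, so $\eta^j\in U$ and is $C$-minimal in $U$. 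In particular, whenever $\eta^j\in F(x)$ the pair $(x,\eta^j)$ is a minimizer for \eqref{r}.

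The heart of the argument, which I expect to be the main obstacle, is to locate each vertex $\eta^j$ inside one of the finitely many sets coming from the given finite infimizer $\bar X=\cb{x^1,\dots,x^k}$. By hypothesis $\cl\conv\bigcup_i(F(x^i)+C)=Q$. Fix $\eta^j$ and choose a linear functional $(c^\ast)^T$ attaining its minimum over the line-free polyhedron $Q$ \emph{uniquely} at $\eta^j$ (such a functional exists since $\eta^j$ is a vertex; boundedness below on the recession cone $C$ is automatic). Because a linear functional has the same infimum over a set, its convex hull and its closure,
\[ (c^\ast)^T\eta^j \;=\; \min_{y\in Q}(c^\ast)^T y \;=\; \min_{i}\;\inf_{y\in F(x^i)+C}(c^\ast)^T y. \]
Choosing an index $i_0$ attaining the outer minimum, the inner infimum over the polyhedron $F(x^{i_0})+C$ is attained at some $\bar y$ with $(c^\ast)^T\bar y=(c^\ast)^T\eta^j$; since $\bar y\in F(x^{i_0})+C\subseteq Q$ and the minimizer over $Q$ is unique, $\bar y=\eta^j$, so $\eta^j\in F(x^{i_0})+C$. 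I then push this down to $F(x^{i_0})$ itself: writing $\eta^j=w+c$ with $w\in F(x^{i_0})\subseteq U$ and $c\in C$, the $C$-minimality of $\eta^j$ in $U$ forces $c=0$, hence $\eta^j\in F(x^{i_0})$.

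To conclude, I select for each vertex $\eta^j$ one such index $i(j)$ and set $\bar X' := \cb{x^{i(j)}\st j=1,\dots,s}\subseteq\bar X$ together with $y^{i(j)}:=\eta^j$. Then $\cb{(x^{i(j)},\eta^j)^T\st j=1,\dots,s}$ is a solution of \eqref{r}: each pair is a minimizer of \eqref{r} by the first paragraph, and it is a finite infimizer because $\inf_{j}\cb{\eta^j}=\conv\cb{\eta^1,\dots,\eta^s}+C=Q=\inf_{x\in\R^n}F(x)$, the common infimum of \eqref{p} and \eqref{r}. Hence $\bar X'$ is a pre-solution of \eqref{p} contained in $\bar X$, proving the claim (the infeasible case $\dom F=\emptyset$ being trivial, as the empty set is then the required pre-solution). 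Apart from the unique exposure of each vertex and the interchange of infimum with a finite union, every step is a direct consequence of the identity $Q=U+C$ and the $C$-minimality of the vertices of $Q$.
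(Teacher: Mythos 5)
Your proof is correct and follows essentially the same route as the paper's: both write the common infimum as $\conv\cb{\eta^1,\dots,\eta^s}+C$ with the $\eta^j$ its vertices, show that every vertex must already lie in $F(\bar x^i)$ for some member $\bar x^i$ of the given finite infimizer, and pair each vertex with such a point to obtain a solution of \eqref{r}, hence a pre-solution contained in the infimizer. The only difference is the level of detail: the paper dispatches the covering step in one sentence (if a vertex missed $\bigcup_i F(\bar x^i)$ it could not be a vertex of $\inf_i F(\bar x^i)$), whereas you justify it explicitly --- via a functional exposing $\eta^j$ uniquely, attainment of the resulting linear program over $F(x^{i_0})+C$, and $C$-minimality of vertices to pass from $F(x^{i_0})+C$ down to $F(x^{i_0})$ --- thereby also proving the facts (polyhedrality of $Q=U+C$, $C$-minimality of vertices, and that the vertex-pairs are minimizers of \eqref{r}) that the paper leaves implicit.
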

\begin{proof}
There are $y^1,\dots, y^r \in \R^q$ such that $\bar P = \inf_{x \in \R^n} F(x)$ holds for the polyhedron $\bar P :=\conv\cb{y^1,...,y^r}+C$.
Without loss of generality we assume that $y^1,\dots, y^r$ are the vertices of $\bar P$. Hence, any set $\cb{(x^i,y^i)\st i=1,...,r}$ with $y^i\in F(x^i)$ is a solution to \eqref{r} and consequently $\cb{x^1,...,x^r}$ is a pre-solution to \eqref{p}. Let $\cb{\bar x^1,...,\bar x^k}$ be a finite infimizer to \eqref{p} and assume that $y^m\notin \bigcup_{j=1,...,k}F(\bar x^j)$ for some $m\in\cb{1,...,r}$. Then $y^m$ is not a vertex of $\inf_{j=1,\dots,k} F(\bar x^j) = \bar P$, a contradiction.
\end{proof}

\section{Algorithm}

According to \eqref{eq_grH} and \eqref{eq_c}, let an instance of problem \eqref{p} be given by $A\in \R^{m\times n}$, $B \in \R^{m\times q}$, $b \in \R^m$, $Z \in \R^{q \times p}$. The algorithm computes a solution to \eqref{p} if the problem is feasible and bounded. Otherwise it detects whether \eqref{p} is infeasible or unbounded. 

In the first phase of the algorithm, the vectorial relaxation \eqref{r}, which is (equivalent to) a linear vector optimization problem, is solved. A solution can be obtained, for instance, with Benson's algorithm, see e.g. \cite{Benson98a,EhrLoeSha12,ShaEhr08,ShaEhr08-1,Loehne11,HamLoeRud12}. We know that \eqref{p} is bounded if and only if so is \eqref{r}. But Benson's algorithm is able to detect if \eqref{r} is unbounded. Note further that in \cite{Loehne11}, Benson's algorithm was extended for unbounded linear vector optimization problems. 

In the second phase, for every point $x^0$ of the pre-solution obtained in the first phase, we  construct a sequence $(x^0,x^1,x^2,\dots,x^l)$ with $F(x^0) \succneq F(x^1) \succneq F(x^2) \succneq \dots\succneq F(x^l)$ until, after finitely many steps, a minimizer $x^l$ is obtained. 

For parameters $w \in \R^q$ and $\bar x \in \dom F$, we consider the following scalar problem:
\begin{equation}\tag{P($w$,$\bar x$)}\label{px}
 \text{ maximize } w^T y \;\text{ subject to }\; y \in F(x), \; F(x) \preceq F(\bar x)  .
\end{equation}
As the $y_1,\dots,y_q\in\R$ are considered to be auxiliary variables, we use the following convention: $\hat x$ is said to be a solution to \eqref{px} if there exists $\hat y\in \R^q$ such that $(\hat x,\hat y)$ is a solution to \eqref{px} in the ordinary sense.
In practice this means that $(\hat x,\hat y)$ is computed but only $\hat x$ is used.
Obviously, we have the following lower bound $\beta$ for the optimal value $\alpha$ of \eqref{px}:
\begin{equation}\label{eq_albe}
\renewcommand{\arraystretch}{1.3}
\begin{array}{ll}
 \alpha(w,\bar x) &:=\sup\cb{w^T y \st y \in F(x),\,F(x) \preceq F(\bar x)} \\
                  &\,\geq \sup\cb{w^T y \st y \in F(\bar x)}=:\beta(w,\bar x).
\end{array}                  
\end{equation}
This leads to an optimality condition.
\begin{lemma}\label{lem2}
Let \eqref{p} be feasible and bounded. For some $\bar x \in \dom F$ let an H-representation of the set $F(\bar x)+C$ be given, that is,
\[ F(\bar x)+C = \cb{y \in \R^q \big|\; \ofg{w^j}^T y \leq \gamma_j,\; j=1,\dots,r}.\]
If $\alpha\ofg{w^j,\bar x}=\beta\ofg{w^j,\bar x}$ for all $j\in \cb{1,\dots,r}$, then $\bar x$ is a minimizer for \eqref{p}.
\end{lemma}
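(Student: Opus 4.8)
The plan is to argue by contraposition: assuming $\bar x$ is \emph{not} a minimizer, I would produce an index $j \in \cb{1,\dots,r}$ for which the hypothesis fails, i.e.\ $\alpha(w^j,\bar x) > \beta(w^j,\bar x)$. Before doing so I would record the one trivial bound that is always available: since $F(\bar x) \subseteq F(\bar x)+C$ and the given H-representation yields $(w^j)^T y \le \gamma_j$ for every $y \in F(\bar x)+C$, we obtain $\beta(w^j,\bar x) = \sup\cb{(w^j)^T y \st y \in F(\bar x)} \le \gamma_j$ for each $j$. Hence it suffices to exhibit, for some $j$, a point that is feasible for \eqref{px} with $w = w^j$ and whose objective value exceeds $\gamma_j$.

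The key step is to turn ``not a minimizer'' into a geometric statement. If $\bar x$ is not a minimizer, there is $x \in \R^n$ with $F(x) \precneq F(\bar x)$, that is $F(x)+C \supset F(\bar x)+C$ together with $F(x)+C \neq F(\bar x)+C$. I would then show that this forces $F(x) \not\subseteq F(\bar x)+C$. Indeed, if $F(x) \subseteq F(\bar x)+C$ held, then adding $C$ and using $C+C=C$ would give $F(x)+C \subseteq F(\bar x)+C$; combined with the reverse inclusion this would yield $F(x)\sim F(\bar x)$, contradicting $F(x)\precneq F(\bar x)$. Therefore some $y' \in F(x)$ lies outside $F(\bar x)+C$, and by the H-representation there is an index $j$ with $(w^j)^T y' > \gamma_j$.

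Finally I would close the loop. Because $y' \in F(x)$ and $F(x)\preceq F(\bar x)$, the point $y'$ is feasible for \eqref{px} with $w = w^j$; hence $\alpha(w^j,\bar x) \ge (w^j)^T y' > \gamma_j \ge \beta(w^j,\bar x)$, which contradicts $\alpha(w^j,\bar x)=\beta(w^j,\bar x)$. This completes the contrapositive.

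I expect the main obstacle---really the only non-routine point---to be the reduction of the minimality condition, which quantifies over all $x\in\R^n$ and all points of $F(x)$, to the finite family of directions $w^1,\dots,w^r$. The mechanism that makes this reduction legitimate is precisely the implication $F(x)\precneq F(\bar x) \Rightarrow F(x)\not\subseteq F(\bar x)+C$: any violation of minimality must push a point of $F(x)$ across one of the finitely many facet-defining inequalities of $F(\bar x)+C$, and that single crossing already breaks the equality $\alpha=\beta$ for the corresponding $w^j$. Once this is isolated, the remaining ingredients---the bound $\beta(w^j,\bar x)\le\gamma_j$ and the definition of $\alpha(w^j,\bar x)$ as a supremum over feasible points---are immediate.
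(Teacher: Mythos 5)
Your proof is correct and follows essentially the same route as the paper's: argue by contraposition, use $C+C=C$ to show that $F(x)\precneq F(\bar x)$ forces a point of $F(x)$ itself (not merely of $F(x)+C$) to lie outside $F(\bar x)+C$, and then conclude $\alpha(w^j,\bar x)>\gamma_j\geq\beta(w^j,\bar x)$ for the violated facet inequality. The only cosmetic difference is that the paper extracts a witness $y+c\notin F(\bar x)+C$ with $y\in F(x)$, $c\in C$ and deduces $y\notin F(\bar x)+C$, whereas you run the same $C+C=C$ argument at the level of set inclusions; the two are equivalent.
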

\begin{proof}
Assume that $\bar x$ is not a minimizer for \eqref{p}, i.e., there exists $x \in \R^n$ with $F(x) \precneq F(\bar x)$. Hence there is some $y \in F(x)$ and some $c \in C$ such that $y+c \not\in F(\bar x)+C$. Since $C+C=C$, we conclude that $y \not\in F(\bar x)+C$. Thus there is some $j \in \cb{1,\dots,r}$ such that ${w^j}^T y > \gamma_j$ which implies $\alpha\ofg{w^j,\bar x}> \gamma_j \geq \beta\ofg{w^j,\bar x}$.
\end{proof}

With the aid of a V-representation of the set $F(\bar x)+C$, that is,
\begin{equation}\label{eq_Vrep}
 F(\bar x)+C = \conv\cb{y^1,\dots,y^s}+C,
\end{equation} 
\eqref{px} can be transformed into a linear program. Note that the assumption that \eqref{p} is bounded was used in \eqref{eq_Vrep}, otherwise the cone on the right hand side can be a superset of $C$. By \eqref{eq_grH}, the first constraint $y\in F(x)$ can be expressed as $Ax+By \geq b$. The second constraint can be transformed as follows:
\begin{equation}\label{eq_constr}
\renewcommand{\arraystretch}{1.1}
\begin{array}{lcl} 
   F(x) \preceq F(\bar x) &\iff& F(x)+C \supseteq F(\bar x) + C \\
                               &\iff& \forall i=1,\dots,s:\; y^i \in F(x)+C\\
                               &\iff& \forall i=1,\dots,s,\; \exists c^i \in C: y^i - c^i\in F(x)\\
                               &\iff& \forall i=1,\dots,s,\; \exists c^i \in C: A x - B c^i \geq b - B y^i
\end{array}
\end{equation}
Thus, \eqref{px} is equivalent to the linear program
\begin{equation}\tag{P($w;y^1,\dots,y^s$)}\label{py}
\renewcommand{\arraystretch}{1.1}
 \text{ max } w^T y \;\text{ s.t.}
 \cb{\begin{array}{rll}     
     Ax+By \!\!&\geq b & \\
     Ax - Bc^i \!\!&\geq b - B y^i &(i=1,\dots,s)\\
               Z^T c^i \!\!&\geq 0 &(i=1,\dots,s) 
     \end{array}}
\end{equation}
which has $n+q(s+1)$ variables and $m+ms+ps$ constraints. According to the above convention we will speak about a solution $x\in \R^n$ and do not mention the $q(s+1)$ auxiliary variables $y,c^1,\dots,c^s\in \R^q$. 

\smallskip 
\subsubsection*{Algorithm \bfseries{\scshape{SetOpt}}.}
 
Input:\\
\indent H-representation of $\gr F$ according to \eqref{eq_grH}: $A\in \R^{m\times n}$, $B \in \R^{m \times q}$, $b \in \R^m$;\\
\indent H-representation of the ordering cone according to \eqref{eq_c}: $Z\in \R^{q\times p}$; \\

\noindent Output:\\
\indent A solution $\bar X$ of \eqref{p} if \eqref{p} is feasible and bounded, $\bar X = \emptyset$ otherwise;\\
\indent The solution status for \eqref{p};\\

\noindent Phase 1:\\
\indent $\bar X \leftarrow \emptyset$;\\
\indent solve \eqref{r};\\
\indent {\bf if} \eqref{r} is infeasible {\bf then} $status \leftarrow \text{\em''\eqref{p} is infeasible.''}$; stop; {\bf end}; \\
\indent {\bf if} \eqref{r} is unbounded {\bf then} $status \leftarrow \text{\em''\eqref{p} is unbounded.''}$; stop; {\bf end};\\
\indent store a pre-solution $\cb{x^1,\dots,x^k}$ of \eqref{p};\\
\noindent Phase 2:\\
\indent {\bf for} $i \leftarrow 1$ {\bf to} $k$ {\bf do}\\
\indent\indent flag $\leftarrow$ 1;\\
\indent\indent$K \leftarrow \emptyset$;\\
\indent\indent {\bf while} $flag=1$ {\bf do}\\
\indent\indent\indent compute a V-representation and an H-representation of $F(x^i)+C$:\\
\indent\indent\indent\indent $\begin{array}{ll}
 F(x^i)+C &=\conv\cb{y^1,\dots,y^s}+C\\
          &=\cb{y \in \R^q \big|\; \of{w^j}^T y \leq \gamma_j,\; j=1,\dots,r};
          \end{array}
          $
 \\
\indent\indent\indent {\bf for} $j \leftarrow 1$ {\bf to} $r$ {\bf do}\\
\indent\indent\indent\indent {\bf if} $w^j/\|w^j\| \notin K$ {\bf then}\\
\indent\indent\indent\indent\indent $K \leftarrow K \cup \cb{w^j/\|w^j\|}$;\\
\indent\indent\indent\indent\indent $\text{solve (P($w^j$; $y^1,\dots y^s$))}$; $x_i \leftarrow$ solution; $\alpha \leftarrow$ optimal value;\\ 
\indent\indent\indent\indent\indent $\beta \leftarrow \max\cb{\ofg{w^j}^T y^1,\dots,\ofg{w^j}^T y^s}$;\\
\indent\indent\indent\indent\indent {\bf if} $\alpha > \beta$ {\bf then} break (i.e., exit the inner-most loop);\\
\indent\indent\indent\indent {\bf end}; \\
\indent\indent\indent\indent {\bf if} $j=r$ {\bf then} $flag \leftarrow$ 0; \\
\indent\indent\indent {\bf end}; \\
\indent\indent {\bf end};\\
\indent\indent $\bar X \leftarrow \bar X \cup \cb{x^i}$;\\
\indent {\bf end};\\
\indent $status \leftarrow \text{\em''\eqref{p} has been solved.''}$;

\bigskip
\noindent
We next show that the algorithm works correctly and is finite. We prepare the theorem by two lemmas.

\begin{lemma}\label{lem0}
Let \eqref{p} be feasible and bounded. Consider some $\bar x \in \dom F$, a halfspace $H=\cb{y \in \R^q \st w^T y \leq \gamma}$ containing the set $F(\bar x) + C$ and finitely many points $y^1,\dots,y^s\in \R^q$ such that \eqref{eq_Vrep} holds.
Then, 
\begin{itemize}
\item[(i)] The linear program {\rm(P($w$; $y^1,\dots y^s$))} has an optimal solution; 
\item[(ii)] The lower bound $\beta$ defined in \eqref{eq_albe} can be expressed as
  $$\beta(w,\bar x) = \max\cb{w^T y^1,\dots,w^T y^s}.$$ 
\end{itemize}
\end{lemma}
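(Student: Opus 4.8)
The plan is to establish (ii) first, since it reduces to evaluating the support function of $F(\bar x)$ in the direction $w$, and then to reuse the facts collected there for (i). The pivotal preliminary observation is that the inclusion $F(\bar x)+C\subseteq H$ forces $w\in C^\circ$. Indeed, fix any $\bar y\in F(\bar x)$ (which exists as $\bar x\in\dom F$) and any $c\in C$; then $\bar y+\lambda c\in F(\bar x)+C\subseteq H$ for every $\lambda\geq 0$, so $w^T\bar y+\lambda\,w^Tc\leq\gamma$ for all $\lambda\geq 0$, and letting $\lambda\to\infty$ excludes $w^Tc>0$. Hence $w^Tc\leq 0$ for all $c\in C$.

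For (ii) I would then use that, because $w^Tc\leq 0$ on $C$ and $0\in C$, forming the Minkowski sum with $C$ leaves the supremum of $w^Ty$ unchanged, so
\[ \beta(w,\bar x)=\sup\cb{w^Ty\st y\in F(\bar x)}=\sup\cb{w^Ty\st y\in F(\bar x)+C}. \]
Substituting the V-representation \eqref{eq_Vrep} and again discarding the $C$-summand, this equals $\sup\cb{w^Ty\st y\in\conv\cb{y^1,\dots,y^s}}$; since a linear functional attains its maximum over a polytope at a vertex, the value is $\max\cb{w^Ty^1,\dots,w^Ty^s}$, as claimed.

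For (i) the standard route is to show the linear program is feasible and bounded, which together guarantee an optimal solution. Feasibility follows by taking $x=\bar x$: reflexivity gives $F(\bar x)\preceq F(\bar x)$, and through the equivalence in \eqref{eq_constr} each $y^i\in\conv\cb{y^1,\dots,y^s}\subseteq F(\bar x)+C$ supplies a suitable $c^i\in C$ with $A\bar x-Bc^i\geq b-By^i$, while any $y\in F(\bar x)$ satisfies $A\bar x+By\geq b$.

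The hard part, and the step where the standing boundedness hypothesis on \eqref{p} is indispensable, is boundedness of the objective. Here I would invoke the assumed bound $\cb{v}\preceq\inf_{x\in\R^n}F(x)$, which yields $F(x)+C\subseteq\cb{v}+C$ for every $x$, hence $F(x)\subseteq\cb{v}+C$. Thus every feasible $y$ satisfies $y-v\in C$, and combining this with $w\in C^\circ$ gives $w^Ty\leq w^Tv$. The program is therefore bounded above, and with feasibility this produces an optimal solution. The only subtlety is that the bound must be applied to $F(x)$ for the program's variable $x$ rather than to $F(\bar x)$ alone — precisely what the global assumption, as opposed to a mere bound on $F(\bar x)$, provides.
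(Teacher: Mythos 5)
Your proof is correct and follows essentially the same route as the paper's: feasibility of the linear program via the point $(\bar x,\bar y,c^1,\dots,c^s)$ constructed from \eqref{eq_Vrep}, boundedness via the standing boundedness assumption on \eqref{p} combined with $w\in C^\circ$, and statement (ii) from $\sup_{c\in C}w^Tc=0$. The only difference is cosmetic: you prove the implication ``$H\supseteq F(\bar x)+C$ implies $w\in C^\circ$'' explicitly by the ray argument, a step the paper asserts without proof, and you treat (ii) before (i).
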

\begin{proof}
Let $\bar y \in F(\bar x)$. Then, $A \bar x + B \bar y \geq b$. By \eqref{eq_Vrep}, for all $i\in \cb{1,\dots,s}$, we have $y^i \in F(\bar x) +C$, i.e., there is some $c^i\in C$ such that $y^i-c^i\in F(\bar x)$, or equivalently, $A \bar x - B c^i \geq b - B y^i$.
Hence, the point $(\bar x,\bar y,c^1,\dots,c^s)$ is feasible for \eqref{py}.

Since \eqref{p} is assumed to be bounded, there exists $v \in \R^q$ such that $\cb{v} + C \supset F(x)$ for all $x \in \R^n$. As $H$ contains $F(\bar x) + C$, we must have $w \in C^\circ$. It follows that
\[ \sup\cb{w^T y \st x\in\R^n\!, y \in F(x)} \leq w^T v +\sup\cb{w^T c\st c \in C} = w^T v,\]
which implies that \eqref{px} and hence \eqref{py} is bounded. This proves (i).

Statement (ii) follows from \eqref{eq_Vrep} taking into account that $\sup_{c \in C} w^T c = 0$ for $w \in C^\circ$.
\end{proof}

\begin{lemma}\label{lem1}
Let \eqref{p} be feasible and bounded and let $\bar x \in \dom F$. If $\hat x \in \R^n$ is a solution of \eqref{px} for some $w \in C^\circ$, then $\alpha(w,u) = \beta(w,u)$ for every $u\in \R^n$ with $F(u)\preceq F(\hat x)$.
\end{lemma}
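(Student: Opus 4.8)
The plan is to reduce the whole statement to the monotonicity behaviour of the scalar support value $\psi(P):=\sup\cb{w^T y\st y\in P}$ along the relation $\preceq$. First I would record the single fact that drives the proof: since $w\in C^\circ$ we have $\sup_{c\in C}w^T c=0$, so $\psi(P+C)=\psi(P)$ for every nonempty $P\in\G$, and therefore $P\preceq Q$ (that is, $P+C\supseteq Q$) implies $\psi(P)\geq\psi(Q)$. In words, $\psi$ is order-reversing along $\preceq$. I would also note, using Lemma~\ref{lem0} together with $w\in C^\circ$ and the boundedness of \eqref{p}, that all suprema below are finite, and that $F(u)\neq\emptyset$ because $F(u)+C\supseteq F(\hat x)\ni\hat y$.

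Next I would identify the optimal value at $\bar x$ with such a support value. Writing $(\hat x,\hat y)$ for the optimal solution of \eqref{px}, so that $\hat y\in F(\hat x)$, $F(\hat x)\preceq F(\bar x)$ and $w^T\hat y=\alpha(w,\bar x)$, I claim $\alpha(w,\bar x)=\psi\of{F(\hat x)}$. Indeed $\psi\of{F(\hat x)}\geq w^T\hat y=\alpha(w,\bar x)$ since $\hat y\in F(\hat x)$; conversely, because $F(\hat x)\preceq F(\bar x)$, every $y\in F(\hat x)$ makes $(\hat x,y)$ feasible for \eqref{px}, so $w^T y\leq\alpha(w,\bar x)$ and hence $\psi\of{F(\hat x)}\leq\alpha(w,\bar x)$.

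Finally I would close the estimates into a sandwich. From $F(u)\preceq F(\hat x)$ and the order-reversing property, $\beta(w,u)=\psi\of{F(u)}\geq\psi\of{F(\hat x)}=\alpha(w,\bar x)$. On the other hand, transitivity of $\preceq$ gives $F(u)\preceq F(\bar x)$, so the feasible set of \eqref{px} for the parameter $u$ is contained in the one for $\bar x$, whence $\alpha(w,u)\leq\alpha(w,\bar x)$. Combined with the trivial bound $\beta(w,u)\leq\alpha(w,u)$ from \eqref{eq_albe}, these give the chain $\alpha(w,u)\leq\alpha(w,\bar x)\leq\beta(w,u)\leq\alpha(w,u)$, forcing equality throughout and in particular $\alpha(w,u)=\beta(w,u)$.

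The only real obstacle is the first step—recognising that $w\in C^\circ$ is exactly what makes $\psi$ order-reversing under $\preceq$; once this and the identity $\alpha(w,\bar x)=\psi\of{F(\hat x)}$ are in hand, the conclusion drops out of a two-line sandwich built on transitivity of $\preceq$. I would only take minor care over finiteness and the nonemptiness of $F(u)$, both of which are already guaranteed by Lemma~\ref{lem0} and the standing boundedness assumption.
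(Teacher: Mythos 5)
Your proof is correct and takes essentially the same route as the paper: the same sandwich of inequalities ($\alpha(w,u)\leq\alpha(w,\bar x)\leq\beta(w,u)\leq\alpha(w,u)$ in your version, $\alpha(w,u)\leq\alpha(w,\hat x)\leq\beta(w,\hat x)\leq\beta(w,u)\leq\alpha(w,u)$ in the paper's), built from feasible-set inclusion via transitivity of $\preceq$, the order-reversing effect of $w\in C^\circ$ on support values, and attainment of the optimum at $(\hat x,\hat y)$. Packaging the support function as $\psi$ and identifying $\alpha(w,\bar x)=\psi\of{F(\hat x)}$ is only a cosmetic reorganization of the paper's argument.
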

\begin{proof}
Obviously, we have $\beta(w,u)\leq \alpha(w,u)$.
For $u\in \R^n$ with $F(u)\preceq F(\hat x)$, we get
\[
\renewcommand{\arraystretch}{1.2}
\begin{array}{ll}
\alpha(w,u) &=\sup\cb{w^T y \st y \in F(x),\,F(x) \preceq F(u)} \\
            &\leq \sup\cb{w^T y \st y \in F(x),\,F(x) \preceq F(\hat x)} = \alpha(w,\hat x).   
\end{array}                 
\]
Since $F(u)\preceq F(\hat x)$ can be written as $F(u) \supseteq F(\hat x)+C$ and, as $w \in C^\circ$, we obtain
\[ \beta(w,u)= \sup\cb{w^T y \st y \in F(u)} \geq \sup\cb{w^T y \st y \in F(\hat x)} = \beta(w,\hat x).\] 
The point $\hat x$ being a solution of \eqref{px} implies that there exists $\hat y\in F(\hat x)$ such that $\beta(w,\hat x)\geq w^T \hat y = \alpha(w,\bar x)\geq\alpha(w,\hat x)$. Altogether we get $\alpha(w,u)\leq \alpha(w,\hat x) \leq \beta(w,\hat x) \leq \beta(w,u)\leq \alpha(w,u)$, which yields the desired equality. 
\end{proof}

\begin{theorem} {\scshape SetOpt} computes a solution of (P) whenever \eqref{p} is feasible and bounded. Otherwise {\scshape SetOpt} states whether \eqref{p} is infeasible or unbounded. 

If the H-representation computed in phase 2 contains no redundant inequalities, {\scshape SetOpt} terminates after finitely many steps. To be more precise, let the pre-solution computed in the first phase consist of $k$ points and let $l$ be the number of linear inequalities necessary to describe the polyhedral convex set $\gr F + (0_{\R^n}\times C)$. In the second phase of {\scshape SetOpt}, at most $l\cdot k$ linear programs have to be solved.
\end{theorem}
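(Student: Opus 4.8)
The plan is to establish three facts in sequence: that phase~1 reports the correct solution status, that the set $\bar X$ returned by phase~2 is a solution of \eqref{p}, and that the procedure stops after at most $l\cdot k$ linear programs. Phase~1 is immediate from the equivalences recorded after \eqref{eq_inf}: problem \eqref{p} is feasible exactly when \eqref{r} is, and \eqref{p} is bounded exactly when \eqref{r} is. Since Benson's algorithm solves the linear vector optimization problem \eqref{r} and signals infeasibility and unboundedness, the two \textbf{if}-tests set the correct status; otherwise they deliver a pre-solution $\cb{x^1,\dots,x^k}$, for which $\inf_i F(x^i)=\inf_{x\in\R^n}F(x)$.

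For phase~2 I would analyse one pre-solution point, writing $x^i=:x^i_0$ and letting $x^i_1,x^i_2,\dots$ be the points that successively replace it whenever the test $\alpha>\beta$ triggers a \textbf{break}. Each replacement is a strict improvement: the returned $\hat x$ is feasible for \eqref{py}, so $F(\hat x)\preceq F(x^i_t)$, while (using that every facet normal of a $C$-upward-closed set lies in $C^\circ$) Lemma~\ref{lem0}(ii) turns $\alpha>\beta$ into an optimal $\hat y\in F(\hat x)$ with $(w^j)^T\hat y>\sup\cb{(w^j)^Ty\st y\in F(x^i_t)+C}$; thus $\hat y\notin F(x^i_t)+C$ and hence $F(x^i_{t+1})\precneq F(x^i_t)$. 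If $\tilde x^i$ denotes the final point, then $F(\tilde x^i)\preceq F(x^i)$ gives $\bigcup_i\of{F(\tilde x^i)+C}\supseteq\bigcup_i\of{F(x^i)+C}$, whence $\inf_i F(\tilde x^i)\preceq\inf_i F(x^i)=\inf_{x\in\R^n}F(x)$; since the reverse relation holds for every finite set, $\bar X$ is a finite infimizer.

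It remains to verify that each $\tilde x^i$ is a minimizer, and the step I expect to be the crux is justifying that a direction once found to satisfy $\alpha=\beta$ may be skipped ever after --- so that in the terminating pass the equality $\alpha\of{w^j,\tilde x^i}=\beta\of{w^j,\tilde x^i}$ holds also for the indices with $w^j/\norm{w^j}\in K$. This is exactly the monotonicity underlying Lemma~\ref{lem1}: for $w\in C^\circ$ one has $\alpha(w,u)\le\alpha(w,u')$ and $\beta(w,u)\ge\beta(w,u')$ whenever $F(u)\preceq F(u')$, and together with $\beta\le\alpha$ this propagates any settled equality downward along the $\preceq$-decreasing chain $F(x^i_0)\succeq F(x^i_1)\succeq\dots\succeq F(\tilde x^i)$. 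Consequently every facet normal $w^j$ of $F(\tilde x^i)+C$ --- those retested in the final pass and those already recorded in $K$ --- satisfies $\alpha=\beta$, and Lemma~\ref{lem2} yields that $\tilde x^i$ is a minimizer.

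Finally, for finiteness I would identify the finite pool from which the normals $w^j$ are drawn. Since $F(x)+C$ equals the $x$-slice $\cb{y\in\R^q\st(x,y)\in\gr F+(0_{\R^n}\times C)}$, and a redundancy-free H-representation of this polyhedron consists of $l$ inequalities in $\R^{n+q}$, every facet normal of every slice $F(x)+C$ coincides with one of the $l$ normals furnished by the $y$-block of that representation. A linear program is solved only for a normalized normal not yet in $K$, and by the previous paragraph such a direction is never revisited; as at most $l$ distinct directions can ever occur, at most $l$ programs are solved per pre-solution point. This simultaneously bounds the number of \textbf{break} events --- hence guarantees termination --- and caps the total at $l\cdot k$.
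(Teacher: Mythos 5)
Your proposal is correct and takes essentially the same route as the paper's own proof: phase~1 via the feasibility/boundedness equivalence of \eqref{p} and \eqref{r}, infimum attainment from $F(\tilde x^i)\preceq F(x^i)$ combined with the pre-solution property, minimality of the final points via Lemma~\ref{lem2} with the equalities for directions already in $K$ supplied by Lemma~\ref{lem1}, and the $l\cdot k$ bound by observing that every facet normal of every set $F(x)+C$ stems from the $y$-block of an irredundant H-representation of $\gr F+(0_{\R^n}\times C)$, so that each solved linear program consumes one of at most $l$ admissible normalized directions. Your two refinements --- the explicit monotonicity of $\alpha$ and $\beta$ along $\preceq$-decreasing chains (which is precisely the content, and the proof mechanism, of Lemma~\ref{lem1}) and the strictness $F(x^i_{t+1})\precneq F(x^i_t)$ at each break --- are correct but add nothing beyond what the paper's argument already uses.
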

\begin{proof}
If \eqref{p} is infeasible or unbounded, so is \eqref{r} and the algorithm terminates with the corresponding status. If \eqref{p} is feasible and bounded, a pre-solution $\cb{x^1,\dots,x^k}$, $k\geq 1$ of \eqref{p} is obtained by solving \eqref{r}.

For fixed $i \in \cb{1,\dots,k}$, denote by $\bar x^i$ the value of the variable $x^i$  before the while loop has been entered and let $\hat x^i$ be the value after the while loop has been left. Then, $\bar X:=\cb{\bar x^1,\dots,\bar x^k}$ is the pre-solution computed in phase 1 and $\hat X:=\cb{\hat x^1,\dots,\hat x^k}$ is the result of the algorithm. We will show that for all $i \in \cb{1,\dots,k}$,
\begin{equation}\label{eq_th41} 
F\ofg{\hat x^i} \preceq F\ofg{\bar x^i} \qquad \text{and} \qquad \not\exists\, x \in \R^n: F(x) \precneq F\ofg{\hat x^i}.
\end{equation}
The first condition in \eqref{eq_th41} implies 
\[ \inf_{x \in \hat X} F(x) \preceq \inf_{x \in \bar X} F(x).\]
Since $\bar X$ is a pre-solution of \eqref{p}, the infimum is attained in $\bar X$, that is,
\[ \inf_{x \in \bar X} F(x) = \inf_{x \in \R^n} F(x).\]
It follows that the infimum is also attained in $\hat X$.
The second condition in \eqref{eq_th41} states that $\hat X$ consists of only minimizers, whence $\hat X$ is a solution to \eqref{p}.
 
To show \eqref{eq_th41}, let $i \in \cb{1,\dots,k}$ be fixed. The first condition of \eqref{eq_th41} follows directly from the constraint $F(x) \preceq F\ofg{x^i}$ of the equivalent formulation (P($w^j,x^i$)) of the linear program (P($w^j$; $y^1,\dots y^s$)). Note that by Lemma \ref{lem0} an optimal solution of (P($w^j$; $y^1,\dots y^s$)) always exists in the case where \eqref{p} is feasible and bounded. The while loop is left only after $flag$ has been set to zero. This requires $r$ iterations in the inner for loop, which occurs only if for every $j \in \cb{1\dots,r}$, $\alpha=\alpha(w^j,\hat x^i)$ equals $\beta=\beta(w^j,\hat x^i)$. In case of $w^j/\|w^j\|\in K$, this is known by Lemma \ref{lem1}, i.e., (P($w^j$; $y^1,\dots y^s$)) does not need to be solved. But $\alpha(w^j,\hat x^i)=\beta(w^j,\hat x^i)$ for all $j \in \cb{1\dots,r}$ implies that $\hat x^i$ is a minimizer, compare Lemma \ref{lem2}. 

To show finiteness, note first that there is a finite algorithm (such as Benson's algorithm) to solve \eqref{r} in phase 1. Consider the map $\tilde F:\R^n\rightrightarrows\R^q$, $\tilde F(x):=F(x)+C$. Of course, $\gr \tilde F = \gr F + (0_{\R^n}\times C)$ is a polyhedral convex set. Thus it can be expressed as
\[ \gr \tilde F = \cb{(x,y)\in \R^n\times\R^q\st \tilde A x + \tilde B y \geq \tilde b},\]
for some $\tilde A \in \R^{l \times n}$, $\tilde B \in \R^{l \times q}$, $\tilde b \in \R^l$.
For every $x \in \dom F$, we have 
\[ F(x)+C=\cb{y \in \R^q \st \tilde B y \geq \tilde b - \tilde A x}.\]
Consequently, every H-representation of $F(x)+C$ that contains no redundant inequalities consists of at most $l$ inequalities. In other words, there are at most $l$ different vectors $w^j/\norm{w^j}$ for $j\in \cb{1,\dots,k}$ in the algorithm, which proves the claim.
\end{proof}

The theorem immediately implies the following existence result.

\begin{corollary} If \eqref{p} is feasible and bounded, a solution exists.
\end{corollary}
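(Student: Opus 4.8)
The plan is to deduce the statement directly from the preceding theorem, as the corollary is announced to be an immediate consequence. The theorem asserts that {\scshape SetOpt} computes a solution of \eqref{p} whenever \eqref{p} is feasible and bounded. Hence, under these hypotheses, running the algorithm yields an output set $\hat X=\cb{\hat x^1,\dots,\hat x^k}$ which, by the correctness part of the theorem (established through \eqref{eq_th41}, namely infimum attainment in $\hat X$ together with $\hat X$ consisting only of minimizers), is a solution to \eqref{p}. Since a computed object is in particular an existing one, the existence of a solution follows.

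The only point I would make explicit is that the output $\hat X$ is genuinely produced, i.e., that the algorithm terminates. The theorem guarantees finite termination under the assumption that the H-representations computed in phase~2 contain no redundant inequalities. I would observe that this assumption is without loss of generality: every nonempty polyhedral convex set $F(\hat x^i)+C$ admits a finite irredundant H-representation, and redundant inequalities can always be detected and removed by solving finitely many linear programs. Therefore one may always run {\scshape SetOpt} so that phase~2 works with non-redundant H-representations, in which case the theorem bounds the number of linear programs by $l\cdot k$ and ensures termination. Combined with the fact that phase~1 solves \eqref{r} by a finite method (such as Benson's algorithm) and, under feasibility and boundedness of \eqref{p}, returns a pre-solution $\cb{x^1,\dots,x^k}$ with $k\geq 1$ rather than reporting infeasibility or unboundedness, the algorithm runs to completion and delivers $\hat X$.

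Putting these observations together yields the claim: feasibility and boundedness guarantee that {\scshape SetOpt} terminates with an output $\hat X$ that is a solution to \eqref{p}, so a solution exists. Since this is a direct corollary of the theorem, I do not expect any genuine obstacle; the only care required is the routine bookkeeping remark that the non-redundancy hypothesis of the theorem can be arranged, and no new argument beyond the theorem itself is needed.
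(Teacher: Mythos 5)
Your proposal is correct and follows exactly the paper's route: the corollary is an immediate instantiation of the preceding theorem, since {\scshape SetOpt} computes a solution whenever \eqref{p} is feasible and bounded. Your extra remark that the non-redundancy hypothesis for the phase~2 H-representations can always be arranged (so termination is guaranteed) is a careful touch the paper leaves implicit, but it does not change the argument.
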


To reduce the computational effort of the algorithm for specific problems, we suggest an additional rule. Let $\cb{(x_1,y_1),\dots,(x_k,y_k)}$ denote the solution of \eqref{r} obtained in phase 1 and consider iteration $i\in \cb{1,\dots,k}$ of the outer for loop in phase 2:
\[ \text{\em If $y_j \in F(x_i)$ for $j$ with $i < j \leq k$ then skip all commands in iteration $j$.}\] 
Clearly, this rule maintains the attainment of the infimum and thus the algorithm still works correctly. 

Finally we consider the special situation where $F:\R^n \to \G_C$, i.e., we have $F(x)=F(x)+C$ for all $x \in \R^n$. In this case, \eqref{eq_constr} can be replaced by 
\[
\begin{array}{lcl} 
   F(x) \preceq F(\bar x) &\iff& F(x) \supseteq F(\bar x) \\
                               &\iff& \forall i=1,\dots,s:\; y^i \in F(x)\\
                               &\iff& \forall i=1,\dots,s: A x \geq b - B y^i\\
                               &\iff& A x \geq \max\cb{b - B y^i \st i=1,\dots,s}.
\end{array}
\]
This means that the linear program \eqref{py} has only $n+q$ variables and $2m$ constraints. The problem to obtain an H-representation of $\gr (F(\cdot)+C)$ from an H-representation of $\gr F$ seems to be difficult in practice where it is typical that $n \gg q$, compare also Remark \ref{rem1}. One way to obtain it is vertex enumeration of a polyhedral convex set in $\R^n\times\R^q$. In contrast, {\scshape SetOpt} involves vertex enumeration only in $\R^q$.

We finally show a special property of solutions obtained by {\sc SetOpt}.

\begin{proposition}
  Every solution to \eqref{p} obtained by the algorithm {\sc SetOpt} is a pre-solution to \eqref{p}.
\end{proposition}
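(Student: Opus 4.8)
The plan is to reuse the Phase~1 witnesses and show that they already certify $\hat X$ as a pre-solution. Adopt the notation of the preceding theorem: Phase~1 produces the pre-solution $\bar X=\cb{\bar x^1,\dots,\bar x^k}$, Phase~2 returns $\hat X=\cb{\hat x^1,\dots,\hat x^k}$, and the first relation in \eqref{eq_th41} gives $F\ofg{\hat x^i}\preceq F\ofg{\bar x^i}$ for every $i$. Since $\bar X$ is a pre-solution, by definition there are $\bar y^i\in F\ofg{\bar x^i}$ such that $\cb{(\bar x^i,\bar y^i)\st i=1,\dots,k}$ is a solution of \eqref{r}. I would prove that the very same vectors $\bar y^i$ work for $\hat X$, i.e.\ that $\cb{(\hat x^i,\bar y^i)\st i=1,\dots,k}$ is again a solution of \eqref{r}. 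Because the $y$-components are unchanged, both defining properties of a solution of \eqref{r} -- infimum attainment and minimality of each point -- reduce to the single feasibility claim $\bar y^i\in F\ofg{\hat x^i}$.

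The heart of the argument, and the step I expect to be the main obstacle, is to upgrade the easy inclusion $\bar y^i\in F\ofg{\hat x^i}+C$ to the exact membership $\bar y^i\in F\ofg{\hat x^i}$. The inclusion with $+C$ is immediate: $F\ofg{\hat x^i}\preceq F\ofg{\bar x^i}$ means $F\ofg{\hat x^i}+C\supseteq F\ofg{\bar x^i}+C\ni\bar y^i$. To remove the $+C$, I would first unwind the definition of \eqref{r} to record that, since $(\bar x^i,\bar y^i)$ is a minimizer of \eqref{r} and $f$ is single-valued, $\bar y^i$ is $C$-minimal in $\bigcup_{x\in\R^n}F(x)$; indeed $\cb{y}\precneq\cb{\bar y^i}$ holds exactly when $y\in\cb{\bar y^i}-C\smz$. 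Writing $\bar y^i=y+c$ with $y\in F\ofg{\hat x^i}$ and $c\in C$, a nonzero $c$ would place $y=\bar y^i-c\in\ofg{\cb{\bar y^i}-C\smz}\cap\bigcup_{x\in\R^n}F(x)$, contradicting this $C$-minimality. Hence $c=0$ and $\bar y^i\in F\ofg{\hat x^i}$.

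Finally I would assemble the conclusion. With $\bar y^i\in F\ofg{\hat x^i}$ for all $i$, the set $\cb{(\hat x^i,\bar y^i)\st i=1,\dots,k}$ is feasible for \eqref{r}. Each $(\hat x^i,\bar y^i)$ is a minimizer of \eqref{r} since, given feasibility, this property depends only on the $C$-minimality of the $y$-component $\bar y^i$, which is inherited from the Phase~1 solution. Infimum attainment also carries over, because $f\ofg{\hat x^i,\bar y^i}=\cb{\bar y^i}=f\ofg{\bar x^i,\bar y^i}$, so by \eqref{eq_inf} the infimum computed over $\hat X$ coincides with the one over $\bar X$, which equals $\inf_{x\in\R^n}F(x)$. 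Thus $\cb{(\hat x^i,\bar y^i)\st i=1,\dots,k}$ solves \eqref{r}, and $\hat X$ is a pre-solution to \eqref{p}. The only genuinely nontrivial ingredient is the $C$-minimality upgrade above; everything else is bookkeeping that transfers the Phase~1 certificate along the relation $F\ofg{\hat x^i}\preceq F\ofg{\bar x^i}$.
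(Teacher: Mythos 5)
Your proof is correct and takes essentially the same route as the paper: both transfer the Phase~1 solution of \eqref{r} to $\hat X$ along the relation $F\ofg{\hat x^i}\preceq F\ofg{\bar x^i}$ from \eqref{eq_th41}, keeping the $y$-components $\bar y^i$ fixed. Your $C$-minimality argument upgrading $\bar y^i\in F\ofg{\hat x^i}+C$ to $\bar y^i\in F\ofg{\hat x^i}$ makes explicit exactly the step that the paper's terse ``Hence, $(\hat x^i,y)$ is a minimizer of \eqref{r} whenever $(\bar x^i,y)$ is'' leaves to the reader.
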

\begin{proof} In phase 1 of the algorithm, a pre-solution $\bar X = \cb{\bar x^1,\dots,\bar x^k}$ is computed. In phase 2 of the algorithm, $\bar X$ is replaced by $\hat X = \cb{\hat x^1,\dots,\hat x^k}$, where
	$f(\hat x^i) \preceq f(\bar x^i)$, $i\in \cb{1,\dots,k}$ by \eqref{eq_th41}. Hence, $(\hat x^i, y)$ is a minimizer of \eqref{r} whenever $(\bar x^i, y)$ is a minimizer of \eqref{r}, which proves the claim.
\end{proof}

%\bibliographystyle{abbrv}

%\bibliography{database}

\end{document}